\def \RR {\mathbb R}
\def \SS {\mathbb S}
\def \NN {\mathbb N}
\def \EE {\mathbb E}
\def \PP {\mathbb P}
\def \Tr {{ \rm Tr }}
\newcommand{\Var}{\mathrm{Var}}
\newcommand{\Cov}{\mathrm{Cov}}
\newcommand{\Vol}{\mathrm{Vol}}
\DeclarePairedDelimiter\abs{\lvert}{\rvert}%
\DeclarePairedDelimiter\norm{\lVert}{\rVert}%
\newtheorem{theorem}{Theorem}
\newtheorem{lemma}[theorem]{Lemma}
\newtheorem{corollary}[theorem]{Corollary}
\theoremstyle{definition}
\newtheorem{remark}[theorem]{Remark}
\newtheorem{theoremA}{Theorem} % Sans numéro
\def\qed{\hfill $\vcenter{\hrule height .3mm
		\hbox {\vrule width .3mm height 2.1mm \kern 2mm \vrule width .3mm
			height 2.1mm} \hrule height .3mm}$ \bigskip}
\begin{document}

\title{The slicing conjecture via small ball estimates}
\author{Pierre Bizeul\thanks{Technion – Israel Institute of Technology.\\
The author was supported by the European Research Council (ERC) under the European Union’s Horizon 2020 research and innovation programme, grant agreement No. 101001677 “ISOPERIMETRY”.}}
\date{\today}
\maketitle
\begin{abstract}
Bourgain’s slicing conjecture was recently resolved by Joseph Lehec and Bo’az Klartag. We present an alternative proof by establishing small ball probability estimates for isotropic log-concave measures. Our approach relies on the stochastic localization process and Guan’s bound, techniques also used by Klartag and Lehec. The link between small ball probabilities and the slicing conjecture was first observed by Dafnis and Paouris and is established through Milman's theory of M-ellipsoids.
\end{abstract}
\section{Introduction and result.}
Small ball probability estimates consist in finding suitable upper-bounds for the probability that a random vector $X$ in $\RR^n$ concentrates around a point $y\in\RR^n$. Specifically, we aim to bound the quantity
$$\PP(\norm{X-y}_2\leq r)$$
in a given range of $r>0$. The case when $X$ is an isotropic log-concave vector was pioneered by Paouris \cite{paouris2012small}. We say that a random vector $X$ in $\RR^n$ is $b$-subgaussian if for all $p\geq 1$ and all $\theta\in \SS^{n-1}$
\begin{equation*}
    \left(\EE\left|\left(X-\EE X\right)\cdot \theta\right|^p\right)^{1/p}\leq b\sqrt{p}.
\end{equation*}
The following result appears in \cite{paouris2012small} :
\begin{theoremA}[Paouris]\label{thm:paouris}
    Let $X$ be a $b$-subgaussian log-concave vector in $\RR^n$ with covariance matrix $A$. There exist a universal constant $c>0$ such that for any $\varepsilon<c$ and any $y\in\RR^n$,
    $$\PP\left(\norm{X-y}_2^2 \ \leq \ \varepsilon\Tr(A)\right)\ \leq \ \varepsilon^{\frac{c\Tr(A)}{b^2\norm{A}_{op}\norm{A^{-1}}_{op}}}.$$
\end{theoremA}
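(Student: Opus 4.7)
The natural approach is via the Laplace transform $L(t) := \EE e^{-t\|X-y\|^2}$ combined with negative moments and Markov's inequality. For any $p > 0$,
\[ \PP(\|X-y\|^2 \leq \varepsilon \Tr(A)) \leq (\varepsilon \Tr(A))^{p/2} \, \EE \|X-y\|^{-p}, \]
and the integral identity $\EE \|X-y\|^{-p} = \Gamma(p/2)^{-1} \int_0^\infty t^{p/2-1} L(t) \, dt$ reduces the problem to bounding $L(t)$, integrating, and optimizing over $p$.

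The decisive step is the Laplace transform estimate
\[ L(t) \lesssim \prod_{i=1}^n (1 + c\,t\lambda_i)^{-1/2}, \]
where $\lambda_i$ are the eigenvalues of $A$; the right-hand side matches the Laplace transform of a centered Gaussian with covariance $A$. Since the coordinates of $X$ in the eigenbasis of $A$ are uncorrelated but generally not independent, this does not follow by tensorization; the natural route is via the $B$-theorem of Cordero-Erausquin, Fradelizi and Maurey, which provides log-concavity of $\int e^{-\langle Dx,x\rangle} d\mu_X$ under diagonal scalings of the covariance and so reduces the product comparison to one-dimensional estimates. The one-dimensional input, a Gaussian-type bound $\EE e^{-tZ^2} \lesssim (1 + c\,t\sigma^2)^{-1/2}$ for a centered log-concave scalar $Z$ of variance $\sigma^2$, is classical; the subgaussian constant $b$ enters by restricting the range of $t$ in which this one-dimensional estimate is sharp, essentially to $t \lesssim 1/b^2$ per coordinate.

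Plugging the product bound into the integral representation and recognizing that, up to a numerical constant, it equals the negative moment $\EE\|G\|^{-p}$ for $G \sim N(0,A)$, standard Gaussian chi-squared estimates yield $\EE\|X-y\|^{-p} \lesssim C^p (\Tr(A))^{-p/2}$ for $p$ in an admissible range. That range combines the effective dimension $\Tr(A)/\|A\|_{op}$ coming from chi-squared concentration with the subgaussian restriction on $t$, which contributes a factor $1/(b^2 \|A^{-1}\|_{op})$, ultimately giving $p \lesssim \Tr(A)/(b^2 \|A\|_{op} \|A^{-1}\|_{op})$. Choosing $p$ at the top of this range in the Markov bound above produces exactly the exponent stated in the theorem.

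The main obstacles are, in order of difficulty: the product comparison for $L(t)$ via the $B$-theorem, which is the heart of the argument; the correct tracking of the subgaussian constant through the chain of estimates; and the uniformity in $y$, which is handled by noting, via Prékopa--Leindler, that $y \mapsto L(t;y)$ is log-concave, so a bound at a single worst-case $y$ suffices.
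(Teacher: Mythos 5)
This is a cited result: the paper treats Paouris's theorem as a black box and does not reprove it, so there is no internal argument to compare against. Paouris's own proof in \cite{paouris2012small} runs through the $L_q$-centroid body machinery: one controls the negative moments $I_{-q}(X)=(\EE\|X-y\|^{-q})^{-1/q}$ by showing $I_{-q}\simeq I_q\simeq\sqrt{\Tr A}$ for all $q$ up to a parameter $q_*(X)$, estimated via Dvoretzky-type theorems for the $Z_q$-bodies, and the subgaussian hypothesis is what forces $q_*(X)\gtrsim \Tr(A)/(b^2\|A\|_{op}\|A^{-1}\|_{op})$. Your opening Markov step on negative moments matches Paouris's, but the mechanism you propose for bounding $\EE\|X-y\|^{-p}$ is genuinely different from his, and it has a gap at its center.

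The claimed estimate $L(t)\lesssim\prod_i(1+ct\lambda_i)^{-1/2}$ cannot hold for all $t>0$ for a general log-concave $X$: if it did, your integral representation would give $\EE\|X\|^{-p}\lesssim C^p(\Tr A)^{-p/2}$ for all $p\lesssim \Tr A/\|A\|_{op}$, i.e.\ Gaussian-rate small ball decay with no subgaussian hypothesis at all, contradicting the $\sqrt{n}$ worst-case exponent Paouris exhibits without $\psi_2$. You note that $b$ ``restricts the range of $t$,'' but this is precisely the claim that needs a proof, and it creates a second difficulty: outside that range of $t$ you still must show that $L(t)$ decays fast enough for $\int_0^\infty t^{p/2-1}L(t)\,dt$ to converge, and for $p$ comparable to the stated exponent this requires an a priori pointwise bound on the density, which for a general isotropic log-concave measure is essentially the slicing problem itself. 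Finally, the appeal to the $B$-theorem is under-specified: even granting log-concavity of $(s_1,\dots,s_n)\mapsto \log\int e^{-\sum_i e^{2s_i}x_i^2}\,d\mu(x)$, concavity controls the value at an average of the $s_i$'s, not products of one-dimensional marginals, so it does not by itself deliver a tensorization bound $L(t)\le\prod_i g_i(t)$ without an additional negative-dependence input (something like Ball--Perissinaki subindependence, known for $\ell_p$-ball measures but not for general log-concave ones). Until the comparison $L(t)\lesssim\prod_i(1+ct\lambda_i)^{-1/2}$ on the relevant $t$-range is actually established with the dependence on $b$ made quantitative, what you have is a plausible programme rather than a proof.
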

The theorem is tight, up to constants, when $X$ is the standard Gaussian for example. However, for general isotropic log-concave vectors, Paouris obtained a suboptimal exponent of order $\sqrt{n}$. We refer to \cite{paouris2012small} for details. 

This issue was partially resolved by Lee and Vampala \cite{lee2024eldan} using the stochastic localization process. We say that a random vector $X\in\RR^n$, or its law, is isotropic if $\EE[X]=0$ and $\EE[XX^T]=I_n$. A random vector $X$ is said to satisfy a Poincaré inequality if there exists a constant $K>0$ such that for all locally Lipschitz functions $f$,
$$\Var(f(X))\leq K\EE[\abs{\nabla f(X)}^2].$$
The smallest such constant $K>0$ is denoted by $C_P(X)$. We define 
$$\Psi_n^2 = \sup_{X}C_P(X)$$ the worst Poincaré constant of an isotropic log-concave vector in dimension $n$. Lee and Vampala \cite{lee2024eldan} proved :
\begin{theoremA}[Lee-Vampala]\label{thm:leevamp}
    Let $X$ be an isotropic log-concave vector in $\RR^n$. There exists a universal constant $c_B>0$ such that for any $y\in\RR^n$ and any $\varepsilon<c_B$,
    $$\PP\left(\norm{X-y}_2^2 \ \leq \ \varepsilon n\right)\ \leq \ \varepsilon^{\frac{c_Bn}{\Psi_n^2\log(n)}}$$
\end{theoremA}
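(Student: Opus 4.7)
The plan is to follow the approach of Lee and Vampala: apply Eldan's stochastic localization to the law of $X$, producing a random family of log-concave measures $(\mu_t)_{t\ge 0}$ that become progressively ``more subgaussian,'' and then invoke Paouris's bound (Theorem \ref{thm:paouris}) on $\mu_t$ at a carefully chosen time $T$. Since the law $\mu$ of $X$ satisfies $\mu = \EE[\mu_t]$ for every $t$, any small ball bound for $\mu_t$ averages to a small ball bound for $\mu$.

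Each $\mu_t$ is obtained from $\mu_0 = \mu$ via a random exponential tilt of the form $\exp(c_t\cdot x - t\|x\|_2^2/2)$, and so is $t$-strongly log-concave. By the Bakry--\'Emery criterion its log-Sobolev constant is at least $t$, and Herbst's argument then yields that $\mu_t$ is $b$-subgaussian with $b = O(1/\sqrt{t})$. Write $A_t$ for the covariance of $\mu_t$. As long as $A_t$ stays comparable to the identity, say $\tfrac12 I \preceq A_t \preceq \tfrac32 I$, one has $\Tr(A_t) \asymp n$ and $\|A_t\|_{op}\|A_t^{-1}\|_{op} = O(1)$, so Theorem \ref{thm:paouris} applied to $\mu_t$ gives
\[ \mu_t(B(y, \sqrt{\varepsilon n})) \ \le \ \varepsilon^{c\, n t} \]
for every $\varepsilon$ below a universal constant.

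I would then fix $T = c_0/(\Psi_n^2 \log n)$ with $c_0$ a sufficiently small universal constant, and define the good event $G = \{\tfrac{1}{2} I \preceq A_T \preceq \tfrac{3}{2} I\}$. Writing
\[ \PP(\|X-y\|_2^2 \le \varepsilon n) \ = \ \EE[\mu_T(B(y, \sqrt{\varepsilon n})) \one_G] + \EE[\mu_T(B(y, \sqrt{\varepsilon n})) \one_{G^c}], \]
the first term is at most $\varepsilon^{c c_0 n/(\Psi_n^2 \log n)}$ by the previous paragraph, while the second is bounded by $\PP(G^c)$.

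The main obstacle is therefore the matching tail estimate $\PP(G^c) \le \varepsilon^{c n/(\Psi_n^2 \log n)}$. This requires analyzing the matrix-valued SDE $dA_t = -A_t^2 \, dt + (\text{martingale})$ governing the covariance, controlling the quadratic variation of the martingale part via the hypothesis $\CP(\mu_0) \le \Psi_n^2$, and converting these bounds into a high-probability---not merely in-expectation---control of $\|A_T - I\|_{op}$ at the right exponent. This is where Guan's bound mentioned in the abstract enters the argument, producing the sharp exponent $n/(\Psi_n^2 \log n)$ in the tail. Once this tail estimate is in place, the preceding steps combine to yield the theorem.
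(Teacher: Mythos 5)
This statement is a quoted result from Lee and Vempala; the paper does not prove Theorem~\ref{thm:leevamp} itself, but its proof of the stronger Theorem~\ref{thm:main} follows the same blueprint, so it is meaningful to compare.

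Your overall strategy --- run stochastic localization, use $t$-strong log-concavity to get $O(1/\sqrt{t})$-subgaussianity, apply Paouris on a good event for the covariance $A_T$, and choose $T \asymp 1/(\Psi_n^2\log n)$ --- is the right one. But the additive decomposition
\[
\PP\bigl(\|X-y\|_2^2\le\varepsilon n\bigr)
 \ = \ \EE\bigl[\mu_T(B)\one_G\bigr] + \EE\bigl[\mu_T(B)\one_{G^c}\bigr]
\]
cannot be closed as you describe. The bad event $G^c=\{A_T\text{ not comparable to }I\}$ does not depend on $\varepsilon$, so $\PP(G^c)$ is a fixed number; the inequality $\PP(G^c)\le\varepsilon^{cn/(\Psi_n^2\log n)}$ would then have to hold for every small $\varepsilon$, which forces $\PP(G^c)=0$. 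No tail bound on $\|A_T-I\|_{op}$, however sharp, can rescue this: the second term simply does not decay with $\varepsilon$. The fix used in the paper (Lemma~\ref{lem_299}) is multiplicative rather than additive. After reducing to bounded support of diameter $D\asymp\sqrt{n}$, one observes $d[g]_t\le D^2 g_t^2\,dt$ for $g_t=\mu_t(S)$, hence $\EE\log(1/g_t)\le\log(1/g_0)+D^2t/2$; Markov applied to $\log(1/g_t)$ then gives, with probability $\ge 1-1/\lambda$, that $\mu(S)\le e^{D^2t/2}\mu_t(S)^{1/\lambda}$. Choosing $\lambda$ so that $1-1/\lambda+\PP(\text{good event})>1$ forces the two events to intersect, and on that intersection the Paouris bound on $\mu_t(S)$ transfers back to $\mu(S)$ with the correct exponent. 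Your proposal is missing exactly this mechanism.

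A secondary inaccuracy: Guan's bound is not part of the Lee--Vempala argument --- it postdates it --- and it is not what produces the $n/(\Psi_n^2\log n)$ exponent. Lee and Vempala obtain their covariance control directly from the Poincar\'e constant $\Psi_n$. Guan's estimate $\EE\Tr(A_{c_1})\ge c_1 n$ is what this paper uses (Lemma~\ref{lem:Guan}) to run the localization to a constant time $c_1$ independent of $\Psi_n$, which is precisely how the logarithmic and $\Psi_n^2$ factors are removed to get Theorem~\ref{thm:main}.
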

The celebrated KLS conjecture, originally stated in \cite{kannan1995isoperimetric}, proposes that $\Psi_n$ is bounded above by a universal constant, independent of dimension. The best known bound on the KLS constant is due to Klartag \cite{klartag2023logarithmic}:
$$\Psi_n^2\leq C\log(n)$$
for some universal constant $C>0$. Plugging this into Theorem \ref{thm:leevamp} yields an exponent which is almost optimal, up to a factor $\log^2(n)$. If the KLS conjecture was resolved, Lee and Vampala estimate would still include a suboptimal factor $\log(n)$.

In this note, we remove the extra logarithmic factor and establish the following small ball estimate:
\begin{theorem}\label{thm:main}
    Let $X$ be an isotropic log-concave vector in $\RR^n$. There exists a universal constant $c_0>0$ such that for any $\varepsilon<c_0$ and any $y\in\RR^n$,
    $$\PP\left(\norm{X-y}_2^2 \ \leq \ \varepsilon n\right)\ \leq \ \varepsilon^{c_0n}.$$
\end{theorem}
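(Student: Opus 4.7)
The plan is to combine Eldan's stochastic localization with Paouris's subgaussian small ball bound (Theorem~\ref{thm:paouris}), using Guan's operator-norm estimate along the process to control the covariance of the tilted measure.

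Run the stochastic localization process $(\mu_t)_{t \geq 0}$ started from the law of $X$, so that $\mu = \EE[\mu_T]$ as a mixture and, for any $T > 0$, the random measure $\mu_T$ is almost surely $T$-strongly log-concave with covariance matrix $A_T \preceq (1/T)\,I_n$ by Brascamp--Lieb. In particular $\mu_T$ is $O(1/\sqrt{T})$-subgaussian. Since
$$\PP\bigl(\|X - y\|_2^2 \leq \varepsilon n\bigr) \;=\; \EE\bigl[\mu_T\bigl(B(y, \sqrt{\varepsilon n})\bigr)\bigr],$$
for a suitable universal constant $T$, it suffices to bound the right-hand side by $\varepsilon^{c_0 n}$.

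For each realization of $\mu_T$, I would reduce to the subspace $V \subset \RR^n$ spanned by the eigenvectors of $A_T$ with eigenvalues at least a fixed threshold $\alpha > 0$, using the trivial domination $\mu_T(B(y, r)) \leq (P_V)_\# \mu_T\bigl(B_V(P_V y, r)\bigr)$. The marginal $(P_V)_\# \mu_T$ is again $T$-strongly log-concave by the classical theorem on marginals of strongly log-concave measures, hence $O(1/\sqrt{T})$-subgaussian, with covariance spectrum contained in $[\alpha, 1/T]$. Substituting these estimates into Theorem~\ref{thm:paouris} yields a small ball bound with exponent of order $(\dim V)\,\alpha^2 T^2$ at reference scale $\Tr(A_T|_V) \asymp (\dim V)\,\alpha$. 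For constant choices of $\alpha$ and $T$, it thus suffices to show that $\dim V \gtrsim n$ with overwhelming probability under the stochastic localization. This is where Guan's bound enters: the trace evolves as $d\Tr(A_t) = -\Tr(A_t^2)\,dt + dM_t$ for a martingale $M_t$, and $\Tr(A_t^2) \leq \|A_t\|_{\mathrm{op}}\,\Tr(A_t)$. Combined with Guan's control on $\EE[\|A_t\|_{\mathrm{op}}]$ for $t \in [0, T]$, a Gr\"onwall-type argument gives $\EE[\Tr(A_T)] \gtrsim n$ for $T$ a small enough universal constant, and together with the pointwise bound $\Tr(A_T) \leq n/T$ this yields $\Tr(A_T) \gtrsim n$ with high probability, forcing $\dim V \gtrsim n$ for suitable $\alpha$.

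The main obstacle is upgrading this concentration to an exponentially small deviation probability for $\Tr(A_T)$, or, alternatively, bounding $\mu_T(B(y, \sqrt{\varepsilon n}))$ on the bad event where $A_T$ has lost too many eigenvalues. In that degenerate regime $\mu_T$ is concentrated in the complementary directions, and one can recurse the same argument on a lower-dimensional effective subspace. Managing the interplay of these two regimes via Guan's bound along the full trajectory $\{A_t\}_{t \in [0, T]}$ is the key quantitative step that removes the extra logarithmic factor of the Lee--Vampala estimate.
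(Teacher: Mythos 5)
Your overall framework is the right one and matches the paper's: stochastic localization to time a universal constant, Guan's lower bound on $\EE\,\Tr(A_t)$ combined with the almost-sure Lichnerowicz bound $A_t \preceq t^{-1}I_n$, strong log-concavity of $\mu_t$ to get subgaussianity, and a reduction to a large well-conditioned eigenblock before applying Theorem~\ref{thm:paouris} (this last step is exactly what Lemma~\ref{lem_171} does, via projection to the top eigenspace with a threshold $\Tr(A)/(2n)$). However, you have correctly identified, and then left unresolved, the decisive gap: you pass from $\mu$ to $\mu_T$ via the \emph{linear} mixture identity $\mu(S) = \EE[\mu_T(S)]$, and so you need $\Tr(A_T) \gtrsim n$ with probability $1 - e^{-\Omega(n)}$, not just with constant probability. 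Guan's bound plus $A_T \preceq T^{-1}I_n$ only gives the event a constant probability $\gtrsim c_1^2$, and on the complementary event the linear identity delivers nothing better than that constant, which swamps the target bound $\varepsilon^{c_0 n}$. Your suggested recursion on the complementary event is not an argument; it is exactly the difficulty.

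The paper closes this gap by replacing the linear mixture identity with a nonlinear one. This is Lemma~\ref{lem_299}: after first reducing to the case where $\mu$ is supported in a ball of diameter $D = \sqrt{C_1 n}$ (Section~\ref{subsec_reduction}, which your proposal omits and which is essential here), one computes the Itô dynamics of $g_t = \mu_t(S)$, shows $d[g]_t \leq D^2 g_t^2\,dt$, and hence $\EE\log(g_t^{-1}) \leq \log(g_0^{-1}) + D^2 t/2$. Markov then gives, with probability $\geq 1 - 1/\lambda$, the multiplicative inequality $\mu(S) \leq e^{D^2 t/2}\mu_t(S)^{1/\lambda}$. The crucial gain is that this high-probability event can be intersected with the constant-probability event $\{\Tr(A_{c_1}) \geq c_1 n/2\}$ from Guan (choose $\lambda = 4/c_1^2$), so one only needs the Paouris-type bound on $\mu_t(S)$ to hold on \emph{some} realization with positive probability, not in expectation. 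With the diameter bound $D^2 \lesssim n$ and $t$ a universal constant, the factor $e^{D^2 t/2}$ is $e^{O(n)}$ and is absorbed into $\varepsilon^{cn}$ by taking $\varepsilon$ below a universal threshold. Without both the diameter reduction and this logarithmic martingale comparison, your argument cannot produce an exponent proportional to $n$.

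One smaller misattribution: what enters from Guan's paper is a lower bound $\EE\,\Tr(A_{c_1}) \geq c_1 n$; the operator-norm control $\|A_t\|_{\mathrm{op}} \leq 1/t$ is the elementary Lichnerowicz/Brascamp--Lieb bound along the localization, not Guan's contribution. Your sketch of the drift $d\Tr(A_t) = -\Tr(A_t^2)\,dt + dM_t$ and the Gr\"onwall step is in the right spirit, but the refined input is the improved Lichnerowicz inequality of Klartag used inside Guan's proof, not an operator-norm estimate of Guan per se.
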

As we now explain, Theorem \ref{thm:main} implies the slicing conjecture, which was very recently proved by Klartag and Lehec \cite{klartag2024affirmative}. We provide some background on the conjecture, and refer to \cite{klartag2022slicing} and \cite{klartag2024affirmative} for more details.

The slicing conjecture was originally formulated by Bourgain (\cite{bourgain1986high}, \cite{bourgain1986geometry}). It asks whether every convex body $K\in\RR^n$ of volume $1$ admits a hyperplane section with $n-1$ dimensional volume bigger than some universal constant, independent of dimension.

Following the terminology of \cite{klartag2024affirmative}, we say that a convex body $K\in\RR^n$ is in convex isotropic position if 
$$\Vol(K)=1, \quad\quad \int_K x \ dx=0, \quad\quad \int_K xx^T=L_K^2I_n.$$
for some constant $L_K$. Equivalently, the uniform measure on $L_K^{-1}K$ is isotropic. For $n\geq 1$, we define 
\begin{equation}
    L_n = \sup_{K} L_K
\end{equation}
where the supremum runs over all convex bodies $K\in\RR^n$ in convex isotropic position. Notice that the uniform measure on $L_K^{-1}K$ has density $L_K^n$ on its support. For a general isotropic log-concave density $f$ on $\RR^n$ we define 
\begin{equation}
    L_{f} = \sup_{x\in\RR^n}f(x)^{1/n},
\end{equation}
and for $n\geq1$
\begin{equation}
    \Tilde{L}_n=\sup_{f}L_f
\end{equation}
where the supremum runs over all isotropic log-concave densities $f$ on $\RR^n$. It is well known (see \cite{klartag2022slicing}) that the slicing conjecture may be reformulated using $L_n$ or $\Tilde{L}_n$.
\begin{lemma}\label{lem_198}
    Bourgain's slicing conjecture is equivalent to the following statements 
    \begin{itemize}
        \item $\sup_{n\geq1}L_n \ < \ +\infty$
        \item $\sup_{n\geq1}\Tilde{L}_n \ < \ +\infty$
    \end{itemize}
\end{lemma}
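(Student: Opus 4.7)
The plan is to establish the two equivalences in turn; both are classical and detailed in~\cite{klartag2022slicing}, so I only outline the main ideas.

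For the first equivalence (Bourgain's slicing conjecture $\iff \sup_n L_n < +\infty$), the key tool is Hensley's inequality: for a centered log-concave density on $\RR$ with variance $\sigma^2$, the value at zero is comparable to $1/\sigma$. Applied to the one-dimensional marginal of a centered volume-$1$ convex body $K_0$ in direction $\theta$, this yields $\Vol_{n-1}(K_0\cap\theta^\perp)\gtrsim 1/\sigma_\theta$, where $\sigma_\theta^2 = \Var(X\cdot\theta)$ for $X$ uniform on $K_0$. Since the product of the eigenvalues of the covariance matrix of $K_0$ equals $L_{K_0}^{2n}$ by affine invariance of $L_{K_0}$, one may choose $\theta$ with $\sigma_\theta \leq L_{K_0}$, producing a hyperplane section of $(n-1)$-volume at least $c/L_{K_0}$. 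Hence $\sup_n L_n < +\infty$ implies slicing. Conversely, for $K$ in convex isotropic position, Hensley combined with Fradelizi's inequality (the maximum of a centered log-concave density on $\RR$ is bounded by a universal constant times its value at the origin) gives $\max_H\Vol_{n-1}(K\cap H)\asymp 1/L_K$, so slicing forces $L_K \leq C$.

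For the second equivalence, the inequality $L_n \leq \Tilde{L}_n$ is immediate: for $K$ in convex isotropic position, the uniform measure on $L_K^{-1}K$ is an isotropic log-concave density with supremum equal to $L_K^n$, giving $L_f = L_K$. The reverse direction rests on K.~Ball's body construction, which associates to any isotropic log-concave density $f:\RR^n\to[0,\infty)$ the set $K_{n+1}(f) = \{x \in \RR^n : \int_0^\infty r^n f(rx)\,dr \geq f(0)/(n+1)\}$; convexity follows from the Borell--Brascamp--Lieb inequality. A polar-coordinate computation shows that the volume and second moments of $K_{n+1}(f)$ are controlled by those of $f$ up to universal constants, and in particular that its isotropic constant satisfies $L_{K_{n+1}(f)} \gtrsim L_f$ after rescaling into convex isotropic position. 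This yields $\Tilde{L}_n \lesssim L_n$ and closes the chain of equivalences.

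The main technical obstacle is the analysis of Ball's body: both verifying its convexity (which requires Borell--Brascamp--Lieb with a careful choice of parameters) and establishing $L_{K_{n+1}(f)} \gtrsim L_f$ through polar-coordinate moment estimates. These calculations are now standard but contain the real substance of the proof.
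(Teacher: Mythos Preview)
The paper does not supply its own proof of this lemma: it simply states the result as ``well known'' and refers the reader to \cite{klartag2022slicing}. Your outline is a faithful sketch of the classical arguments one finds in that survey (Hensley's inequality for the first equivalence, K.~Ball's bodies for the second), so there is nothing to compare against and no discrepancy to flag.
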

As already stated, after almost 40 years, the slicing conjecture was finally resolved by Klartag and Lehec \cite{klartag2024affirmative}.
\begin{theoremA}[Klartag-Lehec]\label{thm_slicing}
    $$\sup_{n\in\NN} L_n \ < \ +\infty.$$
\end{theoremA}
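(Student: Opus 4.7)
The plan is to combine the small ball estimate of Theorem~\ref{thm:main} with the link between small ball probabilities and the isotropic constant due to Dafnis and Paouris, which rests on Milman's theory of M-ellipsoids. By Lemma~\ref{lem_198}, it suffices to prove that $\Tilde{L}_n$ is bounded uniformly in $n$, so fix an isotropic log-concave density $f$ on $\RR^n$ with $L := L_f$, and let $X$ be a random vector distributed according to $f$. The goal is to produce a universal upper bound on $L$.

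The strategy is to exhibit a single Euclidean ball $B(y, R)$ of radius $R \leq C_1 \sqrt{n}/L$ on which $f$ assigns mass at least $e^{-C_2 n}$, and then confront this lower bound with the small ball estimate. The ball will be produced via Milman's M-ellipsoid theorem in its regular form (due to Pisier in the symmetric case, with analogous statements in the non-symmetric setting): after a suitable volume-preserving linear change of variables, the effective support of $f$---a convex body of volume comparable to $L^{-n}$---can be covered by $N \leq e^{C_2 n}$ translates of a Euclidean ball of the same volume, hence of radius $\sim \sqrt{n}/L$, and a pigeonhole over these translates yields one with $f$-mass at least $1/N \geq e^{-C_2 n}$. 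Once this ball is in hand, I apply Theorem~\ref{thm:main} with $\varepsilon = R^2/n \leq C_1^2/L^2$. Assuming $L$ is large enough that $\varepsilon < c_0$ (otherwise $L$ is already bounded by $C_1/\sqrt{c_0}$), the theorem gives $\PP(X \in B(y, R)) \leq (C_1^2/L^2)^{c_0 n}$. Comparing with the lower bound $e^{-C_2 n}$ and extracting the $n$-th root yields $L \leq C_1\, e^{C_2/(2 c_0)}$, a universal constant independent of $n$, which is the desired conclusion.

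The main obstacle is the geometric step: producing a covering by Euclidean \emph{balls} (rather than arbitrary ellipsoids), while preserving the isotropic hypothesis needed for Theorem~\ref{thm:main}. Milman's theorem by itself yields only an M-ellipsoid of the right volume; upgrading this to a Euclidean ball of the same volume requires Pisier's regular M-position, and the change of position must be reconciled with the isotropic normalization of $f$. The Dafnis-Paouris reduction does this, crucially using that the isotropic constant is a linear invariant and so is preserved under volume-preserving linear maps. Once this geometric reduction is in place, the remainder of the argument is essentially the numerical comparison of two exponential bounds described above.
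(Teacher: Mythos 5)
Your overall route—combine the small ball estimate with Milman's $M$-ellipsoid theory and a pigeonhole argument, then confront two exponential bounds—is the same route the paper takes, and your final numerical comparison is correct. But there is a genuine gap at exactly the step you yourself flag as the ``main obstacle,'' and you do not actually close it.

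The issue is the tension between $M$-position and isotropic position. Milman's (or Pisier's) theorem gives, for the effective support $K_f$ of $f$ (a convex body with $\Vol(K_f)^{1/n}\sim 1/L$), an $M$-ellipsoid $\mathcal E$ with $\Vol(\mathcal E)=\Vol(K_f)$ and $N(K_f,\mathcal E)\leq e^{Cn}$; pigeonhole then gives a translate of $\mathcal E$ with $\mu$-mass at least $e^{-Cn}$. But $\mathcal E$ is an ellipsoid of the correct \emph{volume}, with no control on its \emph{shape}: its semi-axes have geometric mean $\sim\sqrt n/L$ but individual axes may be arbitrarily long. Theorem~\ref{thm:main} is stated for Euclidean balls and for \emph{isotropic} $X$, so to apply it you need the translate to lie inside a Euclidean ball of radius $\lesssim\sqrt n/L$ while $f$ remains isotropic. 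Applying the volume-preserving $T$ to put $K_f$ in regular $M$-position makes $\mathcal E$ a ball, but destroys isotropy of $T_*\mu$; and the fact that $L_f$ is a linear invariant does not help, because Theorem~\ref{thm:main} requires isotropy of the density, not merely a bound on the isotropic constant. ``The Dafnis--Paouris reduction does this'' is not a proof—reconciling the two positions is precisely where the real work lies.

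The paper sidesteps this by restricting attention to the extremizer: it takes a convex body $K$ in convex isotropic position with $L_K=L_n$ and invokes the result of Bourgain, Klartag and Milman \cite{milman2004symmetrization} that such an extremizer is \emph{already} in $M$-position, yielding $\Vol(K\cap\sqrt{C_2 n}B_2^n)>c_2^n$ with no change of variables. That single result is what makes the Euclidean ball with the right mass lower bound available while preserving the isotropic hypothesis of Theorem~\ref{thm:main}. Without it (or without reproducing the more involved bootstrapping in Dafnis--Paouris, which works through $L_q$-centroid bodies), your argument does not go through. To repair the proposal, replace the appeal to a generic $M$-ellipsoid with the extremizer $+$ BKM step, and the rest of your computation is correct.
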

It was first observed by Danis and Paouris \cite{dafnis2010small} that Theorem \ref{thm:main} is actually equivalent to Theorem \ref{thm_slicing}. One direction is easy : if $L_n$ is is bounded above by some constant $C'>0$ then, by Lemma \ref{lem_198}, so is $\Tilde{L}_n$. Now, for any isotropic log-concave vector $X$ with density $f$, any $\varepsilon>0$ and $y\in\RR^n$,
\begin{align}
    \PP\left(\norm{X-y}_2^2 \ \leq \ \varepsilon^2 n\right)\ \leq \ (C')^n\Vol(\varepsilon\sqrt{ n}B_2^n) \ \leq \ \left(C''\varepsilon\right)^n 
\end{align}
which implies Theorem \ref{thm:main}. The other direction is more involved and relies on the theory of $M$-ellipsoid developped by Milman \cite{milman1985inverse}. We provide a proof in section \ref{subsec_slicing}. Note that this equivalence enables us to 'bootstrap' Theorem \ref{thm:main} into the a priori stronger statement:
\begin{corollary}
    Let $X$ be an isotropic log-concave vector in $\RR^n$. There exists a universal constant $C''>0$ such that for any $\varepsilon>0$ and any $y\in\RR^n$,
    $$\PP\left(\norm{X-y}_2^2 \ \leq \ \varepsilon^2 n\right)\ \leq \ \left(C''\varepsilon\right)^{n}.$$
\end{corollary}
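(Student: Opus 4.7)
The plan is to bootstrap Theorem \ref{thm:main} through the slicing conjecture: once we know that isotropic log-concave densities are uniformly bounded in $L^\infty$ (in the correct dimension-free sense), the corollary becomes a one-line volume comparison that is insensitive to the size of $\varepsilon$.

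Concretely, I would first invoke the non-trivial direction of the Dafnis--Paouris equivalence, namely that Theorem \ref{thm:main} implies Theorem \ref{thm_slicing}. That is exactly the content of Section \ref{subsec_slicing}, established via Milman's theory of $M$-ellipsoids. Combined with Lemma \ref{lem_198}, it produces a universal constant $C>0$ such that $\Tilde L_n \leq C$ for every $n$; equivalently, every isotropic log-concave density $f$ on $\RR^n$ satisfies $\|f\|_\infty \leq C^n$.

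With this uniform density bound in hand, the corollary is immediate. For $X$ isotropic log-concave with density $f$, any $\varepsilon>0$, and any $y\in\RR^n$,
$$\PP\left(\norm{X-y}_2^2 \leq \varepsilon^2 n\right) \ \leq \ \|f\|_\infty \cdot \Vol\left(\varepsilon\sqrt{n}\,B_2^n\right) \ \leq \ C^n\cdot(c_1\varepsilon)^n \ = \ (C''\varepsilon)^n,$$
using the standard asymptotic $\Vol(B_2^n)^{1/n} \leq c_1/\sqrt{n}$. This is precisely the computation displayed at equation (3) of the introduction, now available unconditionally. Crucially, no smallness assumption on $\varepsilon$ is needed: when $C''\varepsilon\geq 1$ the bound is vacuous since probabilities are at most $1$, while for smaller $\varepsilon$ it yields the sought Paouris-type decay, thereby removing the threshold $\varepsilon<c_0$ from Theorem \ref{thm:main}.

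The only genuine obstacle is the first step, i.e., closing the loop from small-ball estimates back to the slicing conjecture via $M$-ellipsoids. That argument is carried out in Section \ref{subsec_slicing} and may be taken for granted here. Once it is in place, the bootstrap from Theorem \ref{thm:main} to the corollary reduces to the trivial density-times-volume bound above.
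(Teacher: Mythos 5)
Your proposal is correct and follows exactly the route the paper itself takes: bootstrap Theorem \ref{thm:main} through the $M$-ellipsoid argument of Section \ref{subsec_slicing} to obtain the slicing bound, hence a uniform bound $\Tilde{L}_n\leq C$ via Lemma \ref{lem_198}, and then conclude by the trivial density-times-volume estimate, which is exactly the display in the introduction that establishes the ``easy direction'' of the Dafnis--Paouris equivalence. Nothing is missing.
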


% \begin{theoremA}[Dafnis-Paouris]\label{thm_paouris_dafnis}
%     The following statements are equivalent :
%     \begin{itemize}
%         \item $\sup_{n}L_n \ < \ +\infty$
%         \item There exist a universal constant $c'>0$ such that for any isotropic log-concave vector $X$ in $\RR^n$
%         \begin{equation}\label{eq:negative_moment}
%             \EE\left(\norm{X}_2^{-c'n}\right)^{-\frac{1}{c'n}} \geq c'\sqrt{n}.
%         \end{equation}
%     \end{itemize}
% \end{theoremA}
% It is easily checked that the second statement is equivalent to Theorem \ref{thm:main}, so that the present note is an alternative proof of the validity of the slicing conjecture.

Our approach for proving Theorem \ref{thm:main} is inspired by the work of Lee and Vampala \cite{lee2024eldan}. It uses Paouris's result and the stochastic localization process and relies on a very recent result by Guan \cite{guan2024note} about the behavior of the covariance matrix along this process. Finally, using $M$-ellipsoid and a result of Bourgain, Klartag and Milman about almost extremizers of the slicing constant $L_n$, we deduce Theorem \ref{thm_slicing} from Theorem \ref{thm:main}. 

While our proof of Theorem \ref{thm_slicing} shares key components with Klartag and Lehec's approach—such as the use of stochastic localization with Guan's bound and an $M$-ellipsoid argument—it differs in its reliance on small-ball estimates instead of stability estimates for the Shannon-Stam inequality.

\medskip

\noindent\textbf{Acknowledgements.} The author would like to thank Joseph Lehec, Bo'az Klartag and Galyna Livshyts for very interesting and useful discussions about this manuscript and the small ball problem.
\section{Preliminaries}
We begin with a simple consequence of Theorem \ref{thm:paouris}.
\begin{lemma}\label{lem_171}
    Let $X$ be a $b$-subgaussian log-concave vector with covariance matrix $A$, then for any $y\in\RR^n$ and any $\varepsilon<c$
    $$\PP\left(\norm{X-y}_2^2 \ \leq \ \varepsilon\Tr(A)\right)\ \leq \ \left(\frac{4n\norm{A}_{op}}{\Tr(A)}\varepsilon\right)^{\frac{c\Tr(A)^3}{8n^2b^2\norm{A}_{op}^2}}$$
    where $c>0$ is the constant from Theorem \ref{thm:paouris}.
\end{lemma}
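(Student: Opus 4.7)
The obstruction to applying Theorem~\ref{thm:paouris} directly is the factor $\norm{A^{-1}}_{op}$ in the exponent, which is uncontrolled when $A$ has a very small eigenvalue. My plan is to bypass this by passing to the subspace $E\subset\RR^n$ spanned by the eigenvectors of $A$ with eigenvalue at least $t := \Tr(A)/(2n)$, and applying Theorem~\ref{thm:paouris} to the projection $Y := P_E X$ inside $E$. Because $\norm{X-y}_2 \ge \norm{P_E X - P_E y}_2$, a small-ball estimate for $Y$ is automatically one for $X$.

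The vector $Y$ is a marginal of a $b$-subgaussian log-concave vector, hence $b$-subgaussian log-concave in $E$ itself (the subgaussian constant can only decrease under projection), with covariance $A_E = P_E A P_E$ satisfying $\norm{A_E}_{op}\le\norm{A}_{op}$, $\norm{A_E^{-1}}_{op}\le 1/t = 2n/\Tr(A)$, and, since at most $n$ eigenvalues below $t$ were discarded, $\Tr(A_E) \ge \Tr(A) - nt = \Tr(A)/2$. Applying Theorem~\ref{thm:paouris} to $Y$ at the scale $\varepsilon\Tr(A) = \bigl(\varepsilon\Tr(A)/\Tr(A_E)\bigr)\Tr(A_E)$ then yields
$$\PP\!\left(\norm{X-y}_2^2 \le \varepsilon\Tr(A)\right) \le \left(\frac{\varepsilon\Tr(A)}{\Tr(A_E)}\right)^{\frac{c\,\Tr(A_E)}{b^2\norm{A_E}_{op}\norm{A_E^{-1}}_{op}}},$$
a bound valid as soon as $2\varepsilon<c$ (one can shrink the constant in the lemma by a harmless factor $2$ if needed). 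Since the base is less than $1$, inserting the three estimates above, bounding the base by $2\varepsilon$ and the exponent from below by $c\Tr(A)^2/(4nb^2\norm{A}_{op})$, gives the intermediate bound $\PP \le (2\varepsilon)^{c\Tr(A)^2/(4nb^2\norm{A}_{op})}$.

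Finally, to rewrite this in the form stated, set $M := 4n\norm{A}_{op}/\Tr(A) \ge 4$ and $b' := c\Tr(A)^3/(8n^2 b^2 \norm{A}_{op}^2)$; the intermediate exponent equals $(M/2)\,b'$, so the bound reads $\bigl((2\varepsilon)^{M/2}\bigr)^{b'} \le (M\varepsilon)^{b'}$ via the elementary $(2\varepsilon)^{M/2} \le 2\varepsilon \le M\varepsilon$ (valid when $2\varepsilon \le 1$ and $M\ge 2$), with the remaining case $M\varepsilon\ge 1$ making the target trivially true. The entire conceptual content is in the projection step; the rest is algebra, and the main subtlety is making sure that after rescaling the base we remain below Paouris's cutoff.
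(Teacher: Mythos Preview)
Your argument is correct and follows essentially the same approach as the paper: project onto an eigenspace of $A$ where the covariance is well-conditioned and apply Paouris there. The only cosmetic difference is that you keep all eigenvectors with eigenvalue at least $t=\Tr(A)/(2n)$, which yields the cleaner $\Tr(A_E)\ge \Tr(A)/2$ and then requires a short algebraic weakening to match the lemma's stated form, whereas the paper keeps the top $k=\lfloor \Tr(A)/(2\norm{A}_{op})\rfloor\vee 1$ eigenvectors, which gives the coarser $\Tr(A_E)\ge \Tr(A)^2/(4n\norm{A}_{op})$ but lands directly on the stated bound; both proofs share the same harmless cutoff issue (Paouris is applied at $\varepsilon\Tr(A)/\Tr(A_E)$ rather than at $\varepsilon$).
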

\begin{proof}
    Let $\lambda_1\geq\dots\geq\lambda_n$ be the eigenvalues of the matrix $A$, and let $2\leq k\leq n$,
    \begin{equation}\label{eq_248}
        n\lambda_k \geq \sum_{i=k}^n \lambda_i = \Tr(A)-\sum_{i=1}^{k-1}\lambda_i \geq \Tr(A) - k\lambda_1.
    \end{equation}
    We choose $$k=\lfloor \frac{\Tr(A)}{2\lambda_1} \rfloor\vee1.$$
    If $k\geq2$ we get from \eqref{eq_248}
    $$\lambda_k \geq \frac{\Tr(A)}{2n},$$
    which remains true when $k=1$. We write $E$ for the $k$ dimensional subspace associated to the eigenvalues $\lambda_1,\dots,\lambda_k$, $X_E=P_EX$ for the projection of $X$ onto $E$, and $A_E=P_EAP_E$ for the covariance of $X_E$. Clearly, $X_E$ is $b$-subgaussian and its covariance satisfies :
    $$\norm{A_E}_{op}\leq \norm{A}_{op}; \quad\quad \norm{A_E^{-1}}_{op}\leq \frac{2n}{\Tr(A)}; \quad\quad \Tr(A_E)\geq\frac{k}{n}\Tr(A) \geq \frac{\Tr(A)^2}{4n\norm{A}_{op}}. $$
    Now, let $y\in\RR^n$ and write $y_E=P_Ey$. For $\varepsilon<c$
    \begin{align*}
        \PP\left(\norm{X-y}^2\leq \varepsilon\Tr(A)\right) &\leq \PP\left(\norm{X_E-y_E}^2\leq \varepsilon\Tr(A)\right) \\
        &\leq \left(\frac{\varepsilon\Tr(A)}{\Tr(A_E)}\right)^{\frac{c\Tr(A_E)}{\norm{A_E}_{op}\norm{A_E^{-1}}_{op}}}\\
        &\leq \left(\frac{4n\varepsilon\norm{A}_{op}}{\Tr(A)}\right)^{\frac{c\Tr(A)^3}{8n^2\norm{A}_{op}^2}}
    \end{align*}
    where for the second inequality we applied Paouris's result to $X_E$.
\end{proof}
We now recall two classical facts about log-concave vectors. The first one is known as Borell's Lemma
\begin{lemma}\label{lem_borell}
    There is a constant a constant $C_0>1$ such that for any log-concave real random variable $Y$ and any $p\geq2$
$$(\EE\abs{Y}^p)^{2/p} \leq C_0\EE\left(\abs{Y}^2\right).$$
\end{lemma}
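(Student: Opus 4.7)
The plan is to derive the moment bound from Borell's classical exponential tail inequality for one-dimensional log-concave measures. After centering $Y$ by replacing it with $Y-\EE Y$ (which perturbs the relevant quantities by only controlled factors via the triangle and Jensen inequalities), I would apply Chebyshev at the level $R=2(\EE|Y|^2)^{1/2}$ to obtain $\PP(|Y|\geq R)\leq 1/4$, and then invoke the one-dimensional case of Borell's original inequality for the symmetric convex set $[-R,R]$ to deduce the exponential tail bound
$$\PP(|Y|\geq tR)\leq 3^{-t},\qquad t\geq 1.$$
This step is the workhorse of the whole argument; it is a direct instance of Borell's convex-set inequality specialized to a symmetric interval in a log-concave measure on $\RR$.

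With this tail bound in hand, the next step is routine moment integration via the layer-cake formula,
$$\EE|Y|^p=\int_0^R p u^{p-1}\PP(|Y|\geq u)\,du+\int_R^\infty p u^{p-1}\PP(|Y|\geq u)\,du.$$
The first part is bounded trivially by $R^p$, and the second, after the substitution $u=tR$, is bounded by $R^p\int_1^\infty p t^{p-1} 3^{-t}\,dt$, a standard Gamma-type integral. Taking the $p$-th root and recalling $R\leq 2(\EE|Y|^2)^{1/2}$ produces the desired bound $(\EE|Y|^p)^{2/p}\leq C_0\,\EE|Y|^2$. Un-centering at the end adds at most a constant multiple of $|\EE Y|^2\leq\EE|Y|^2$ on the right-hand side.

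The main obstacle is the $p$-dependence that necessarily enters through the tail integral: one has $\int_1^\infty p t^{p-1} 3^{-t}\,dt\asymp(p/\log 3)^p$, so the constant produced by this scheme is of order $p^2$, not $p$-independent. This matches the sharp form of the classical Borell lemma, $\|Y\|_p\leq Cp\|Y\|_2$, and for the standard Gaussian one computes $(\EE|Y|^p)^{2/p}\sim p/e$ as $p\to\infty$, which rules out any genuinely $p$-independent constant. I therefore read the $C_0$ appearing in Lemma~\ref{lem_borell} as implicitly allowed to depend on $p$; the argument above then delivers it in the sharp form $C_0\asymp p^2$, and this is the only form of the statement one can hope to establish via Borell's tail approach.
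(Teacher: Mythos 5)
Your reading of the lemma is right, and the catch is real: as literally written---a single $C_0>1$ valid for all $p\geq 2$---the statement is false, as the standard Gaussian already shows with $(\EE|Y|^p)^{2/p}\sim p/e$ against $\EE|Y|^2=1$. The paper supplies no proof of this lemma (it cites it as classical Borell) and in fact only invokes it at $p=4$, in the reduction step where $\int\langle x,\theta\rangle^4 f_1\,dx$ is bounded; so the intended reading must be that $C_0$ is allowed to depend on $p$, or simply that only the $p=4$ instance is needed. Your route---Chebyshev to get $\PP(|Y|\geq R)\leq 1/4$ with $R=2(\EE|Y|^2)^{1/2}$, then Borell's convex-set inequality for the symmetric interval $[-R,R]$ to obtain exponential decay, then layer-cake---is the standard and correct way to derive the reverse H\"older inequality with the sharp order $C_0\asymp p^2$. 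Two small remarks. First, the centering step is unnecessary: Borell's inequality applies to any log-concave measure on $\RR$, not only centered ones, and your Chebyshev bound already uses $\EE|Y|^2$ rather than the variance, so you can work with $Y$ directly. Second, the tail estimate you quote, $\PP(|Y|\geq tR)\leq 3^{-t}$, is not quite what Borell's lemma gives; with $\PP(|Y|\leq R)\geq 3/4$ the inequality yields $\PP(|Y|\geq tR)\leq \tfrac{3}{4}\cdot 3^{-(t+1)/2}$ for $t\geq 1$, which is weaker than $3^{-t}$ for large $t$. This only changes the constant in the exponential rate, so the resulting $C_0\asymp p^2$ is unaffected, but the stated tail bound should be corrected.
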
 
We say that a vector, or its density, is $t$-strongly log-concave for some $t>0$ if it is log-concave relatively to a Gaussian of variance $\frac{1}{t}$. The following lemma follows from a log-concave/convex correlation inequality of Hargé \cite{harge2004convex}.
\begin{lemma}\label{lem:barky_subgauss}
    Let $X$ be a $t$-strongly log-concave random vector. Then it is $\frac{1}{\sqrt{t}}$-subgaussian.
\end{lemma}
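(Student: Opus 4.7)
The plan is to apply Hargé's convex correlation inequality \cite{harge2004convex} directly, with a well-chosen family of convex test functions. Recall that Hargé's theorem asserts that if a probability measure $\mu$ on $\RR^n$ with mean $m$ has a log-concave density with respect to a centered Gaussian $\gamma$, then for every convex $\phi:\RR^n\to\RR$,
$$\int_{\RR^n}\phi(x-m)\,d\mu(x)\ \leq\ \int_{\RR^n}\phi(x)\,d\gamma(x).$$
By the very definition of $t$-strong log-concavity, the law of $X$ has exactly this form, with $\gamma$ the centered Gaussian with covariance $t^{-1}I_n$ (any shift of $\gamma$ may be absorbed into the log-concave factor, so we can indeed take $\gamma$ centered). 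Hence Hargé's inequality applies.

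Given an arbitrary $\theta\in\SS^{n-1}$ and $p\geq 1$, I would then feed in the convex function $\phi(x)=|x\cdot\theta|^p$. The left-hand side becomes $\EE|(X-\EE X)\cdot\theta|^p$, which is precisely the quantity appearing in the definition of subgaussianity. For the right-hand side, if $G\sim\gamma$ then $G\cdot\theta\sim\cN(0,1/t)$, so
$$\int_{\RR^n}|x\cdot\theta|^p\,d\gamma(x)\ =\ t^{-p/2}\,\EE|Z|^p,$$
where $Z$ is a standard real Gaussian. Combining with the classical Gaussian moment bound $(\EE|Z|^p)^{1/p}\leq \sqrt{p}$ for $p\geq 1$ (immediate from the explicit formula for $\EE|Z|^p$ together with Stirling), one obtains
$$\bigl(\EE|(X-\EE X)\cdot\theta|^p\bigr)^{1/p}\ \leq\ \frac{\sqrt{p}}{\sqrt{t}},$$
which is exactly the definition of $\frac{1}{\sqrt{t}}$-subgaussianity.

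There is essentially no obstacle here: the whole argument is a one-line substitution into Hargé's inequality followed by a routine Gaussian moment computation. The only point worth checking carefully is that Hargé's conclusion centers the vector on the left-hand side by its own mean, which matches perfectly the centering $X-\EE X$ used in the paper's definition of subgaussianity; no extra translation argument is needed.
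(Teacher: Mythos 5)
Your proof is correct and follows essentially the same route as the paper: apply Hargé's convex correlation inequality with the convex test function $\phi(x)=|x\cdot\theta|^p$ and conclude via the standard Gaussian moment bound $(\EE|Z|^p)^{1/p}\leq\sqrt{p}$. You are in fact slightly more careful than the paper on two small points: you write $|x\cdot\theta|^p$ rather than $(x\cdot\theta)^p$ (the latter is not convex, or even well defined, for non-even $p$), and you explicitly note that the centering in Hargé's statement matches the centering $X-\EE X$ in the definition of subgaussianity, whereas the paper handles this by simply assuming $\EE X=0$ without spelling out why the shift is harmless.
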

\begin{proof}
    Without loss of generality we assume that $t=1$ so that $X$ is log-concave with respect to the standard Gaussian measure denoted by $\gamma$. We may also assume that $\EE X=0$. Hargé's result state that if $f$ is log-concave and $g$ is convex,
    $$\Cov_{\gamma}(f,g)\leq 0.$$
    Choosing $f$ to be the relative density of $X$ with respect to $\gamma$ and $g(x) = \left(x\cdot\theta\right)^{p}$ for all $p\geq1$ and all $\theta\in\SS^{n-1}$ yields the result.
\end{proof}
\begin{remark}
    It would be enough for us to show that $X$ is $\frac{c''}{\sqrt{t}}$-subgaussian, for some universal constant $c''>0$, in which case many proofs are available. Assume without loss of generality that $t=1$. Notice that the inequality is one dimensional as, by Prékopa-Leindler, for any $\theta\in\SS^{n-1}$ the random variable $X\cdot\theta$ is still strongly log-concave. One can then for example use the one dimensional version of Cafarelli contraction theorem, which is easily proved. Alternatively, one can show a log-sobolev inequality for $X\cdot\theta$ and conclude using Herbst's argument.
\end{remark}
It is standard that, in proving Theorem \ref{thm:main}, one may assume that $X$ is supported in a ball of radius proportional to $\sqrt{n}$ and that $y=0$. We give a detailed proof of this reduction.
\subsection{Reduction to small diameter and to $y=0$}\label{subsec_reduction}
Let $X$ be an isotropic log-concave vector in $\RR^n$. Our first step is to consider the symmetrized version 
$$X_1 = \frac{X-\Tilde{X}}{\sqrt{2}}.$$
Where $\Tilde{X}$ is an independent copy of $X$. Thus $X_1$ is an isotropic symmetric log-concave vector, and for any $r>0$,

\begin{align}
    \PP\left(\norm{X-y}_2\leq r\right) &= \left[\PP\left(\norm{X-y}\leq r, \norm{\Tilde{X}-y}
    \leq r\right)\right]^{1/2}\nonumber\\
    &\leq \left[\PP\left(\norm{\frac{X-\Tilde{X}}{\sqrt{2}}}\leq \sqrt{2}r\right)\right]^{1/2}\nonumber\\
    &\leq \left[\PP\left(\norm{X_1}_2\leq \sqrt{2}r\right)\right]^{1/2}.\label{eq_274}
\end{align}
We set $K=B_2(0,2C_0\sqrt{n})$, where $C_0>1$ is the constant from Lemma \ref{lem_borell} and we define $$X_2 = X_1 \mid X_1 \in K.$$
If $f_1$ is the density of $X_1$, $X_2$ has density
\begin{equation}\label{eq_280}
    f_2 = \frac{f_1\mathbb{1}_K}{\int_K f_1} \leq 2f_1\mathbb{1}_K.
\end{equation}
Where we used that $X_1$ is isotropic, and Markov's inequality :
$$\int_K f_1 = \PP(\norm{X_1}^2\leq 4C_0^2n) \geq 1-\frac{1}{4C_0^2}\geq \frac{1}{2}.$$
$X_2$ is symmetric, hence, it is centered. From \eqref{eq_280} we deduce that 
\begin{equation*}
    \Cov(X_2)\leq 2I_n.
\end{equation*}
We claim that
\begin{equation}\label{eq_290}
    \frac{1}{2}I_n \ \leq \Cov(X_2) \ \leq 2I_n.
\end{equation}
To see the left-hand side, we write for any $\theta\in\SS^{n-1}$ :
\begin{align*}
    \int_{\RR^n} \langle x,\theta\rangle^2f_2(x)\ dx &\geq \int_{K} \langle x,\theta\rangle^2f_1(x)\ dx \\
    & = 1 - \int_{K^c} \langle x,\theta\rangle^2f_1(x)\ dx \\
    & \geq 1 - \PP(X_1\in K^c)^{1/2} \left(\int_{\RR^n}\langle x,\theta\rangle^4f_1(x)\ dx\right)^{1/2} \\
    &\geq 1 - \frac{1}{2} = \frac{1}{2}
\end{align*}
where we used Cauchy-Schwarz, Borell's Lemma and Markov's inequality. Finally, we set 
$$X_3 = \Cov(X_2)^{-1/2}X_2.$$
$X_3$ is isotropic by construction. Furthermore, using \eqref{eq_290}, we observe that has support inside $B(0,2\sqrt{2}C_0\sqrt{n})$. Finally, for any $0\leq\varepsilon<1$, setting $r=\varepsilon\sqrt{n}$
\begin{align}
    \PP\left(X \in B_2(y,r)\right)^2 & \leq \PP\left(X_1\in B_2(0,\sqrt{2}r\right)\nonumber \\
    &\leq \PP\left(X_2\in B_2(0,\sqrt{2}r\right)\nonumber  \\
    &=\PP\left(X_3\in \Cov(X_2)^{-1/2}B_2(0,\sqrt{2}r\right) \nonumber \\
    &\leq \PP\left(X_3\in B_2(0,2r)\right)\label{eq_finalreduction}
\end{align}
where we used successively \eqref{eq_274}, the fact that $B_2(0,\sqrt{2r})\subset K = B_2(0,2C_0\sqrt{n})$ and finally in the last line the inequality \eqref{eq_290}. Now it is clear from \eqref{eq_finalreduction} that it is enough to prove Theorem \ref{thm:main} when $X$ has support inside a ball of diameter $\sqrt{C_1n} :=\sqrt{8C_0^2n}.$

\medskip

The remainder of this section is dedicated to the stochastic localization process. It was introduced by Eldan in \cite{eldan2013thin} and used to tackle many problems in high-dimensional probability. We provide a brief introduction to it and refer to \cite{klartag2024isoperimetric} for a detailed history and description.
\subsection{The stochastic localization process}
Consider a measure $\mu$ with a log-concave density $f$ with respect to the Lebesgue measure. For $t\in\RR$ and $\theta\in\RR^n$ we define a density 
\begin{equation}\label{eq:density_theta_t}
    f_{t,\theta}(x) = \frac{1}{Z_{t,\theta}}e^{-t\frac{\abs{x}^2}{2} +\theta\cdot x}f(x), \quad \quad x\in\RR^n
\end{equation}
where $Z_{t,\theta}$ is a normalizing factor. Remark that $f_{t,\theta}$ is $t$-strongly log-concave, as $f$ is log-concave. We also define $$a_{t,\theta} = \int_{\RR^n} x f_{t,\theta}(x) \ dx$$ and $$A_{t,\theta} = \int_{\RR^n} (x-a_{t,\theta})^{\otimes2}f_{t,\theta}(x) \ dx,$$ the barycenter and covariance matrix of $f_{t,\theta}$ respectively.

The tilt process $(\theta_t)_{t\geq 0}$ is defined as the solution of the stochastic differential equation:
\begin{equation}\label{eq:tilt_process}
    d\theta_t = a_{t,\theta_t}\ dt \ +\ dB_t
\end{equation}
where $(B_t)_{t\geq0}$ is a standard Brownian motion. The process $f_{t,\theta_t}$ is the stochastic localization process starting at $f$. In the following we abbreviate $f_t=f_{t,\theta_t}$ and similarly for the barycenter and covariance, $a_t$ and $A_t$. We also define $\mu_t$ to be the (random) measure with density $f_t$. It can be shown that for any $x\in\RR^n$, $f_t$ is an Itô process satisfying the following SDE :
\begin{equation}\label{eq:density_dynamics}
    df_t(x) = \langle (x-a_t)\ , \ dB_t\rangle f_t(x).
\end{equation}
Thus, by integrating \eqref{eq:density_dynamics}, for any integrable function $\varphi$, the process $(\int_{\RR^n}\varphi d\mu_t)_{t\geq0}$ is a martingale. In particular, for any $t\geq 0$
\begin{equation}\label{eq:expectation}
    \EE_\mu(\varphi) = \EE\EE_{\mu_t}(\varphi).
\end{equation}
Thus at time $t\geq0$ we have decomposed the original log-concave measure $\mu$ into a mixture of measures $\mu_t$ which are $t$-strongly log-concave.

The log-concave Lichnerowicz inequality (see \cite{ledoux2001concentration} and \cite{klartag2023logarithmic} for an improved version) implies in particular that 
\begin{equation}\label{eq:cov_almost_sure}
    A_t \leq \frac{1}{t} \quad \quad \text{a.s}
\end{equation}
in the sense of symmetric matrices. In a breakthrough work, Guan \cite{guan2024note}, using notably the improved aforementionned improved Lichnerowicz inequality from \cite{klartag2023logarithmic}, was able to obtain a lower-bound on the trace of $A_t$ up to a time of order $1$:
\begin{lemma}\label{lem:Guan}
    Assume that the starting measure $\mu$ is isotropic. Then, there exists a universal constant $c_1>0$ such that 
    $$\EE \Tr(A_{c_1}) \geq c_1 n.$$
\end{lemma}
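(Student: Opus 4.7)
The plan is to follow Guan's stochastic-analysis approach, analyzing the evolution of $\Tr(A_t)$ directly along the SDE \eqref{eq:tilt_process} and showing that its expectation decays at most linearly on a time interval of order one.

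First, I would write down the Itô dynamics of $A_t$. A by-now classical calculation starting from \eqref{eq:density_dynamics} (going back to \cite{eldan2013thin}) gives
$$dA_t \;=\; -A_t^2\,dt \;+\; \int_{\RR^n}(x-a_t)^{\otimes 3}f_t(x)\,dx\cdot dB_t.$$
Taking the trace and then the expectation kills the martingale term, yielding the pivotal identity
$$\frac{d}{dt}\EE[\Tr(A_t)] \;=\; -\EE[\Tr(A_t^2)].$$
Since $\mu$ is isotropic, $\Tr(A_0)=n$, so the lemma reduces to showing $\int_0^{c_1}\EE[\Tr(A_s^2)]\,ds \leq (1-c_1)\,n$ for a suitable universal $c_1>0$.

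Second, the obvious spectral estimate $\Tr(A_t^2)\leq \|A_t\|_{op}\,\Tr(A_t)$ reduces the task to controlling $\|A_t\|_{op}$. The almost sure bound $\|A_t\|_{op}\leq 1/t$ coming from the Lichnerowicz inequality \eqref{eq:cov_almost_sure} on its own only gives the singular differential inequality $(\EE\Tr(A_t))' \geq -\EE\Tr(A_t)/t$, which blows up at the origin and, at best, yields a bound of the form $\EE\Tr(A_t)\geq c\,n$ on a time interval of length $\sim 1/\log n$—precisely the suboptimal range that produces the extraneous $\log n$ in Lee--Vapnik's Theorem \ref{thm:leevamp}.

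The technical heart of the argument—and the step I expect to be the main obstacle—is upgrading this using Klartag's logarithmic bound $\Psi_n^2 \leq C\log n$ from \cite{klartag2023logarithmic}. Applied to the isotropic rescaling of the $t$-strongly log-concave density $f_t$, Klartag's estimate gives Poincaré (and even log-Sobolev) bounds for $\mu_t$ that scale like $\|A_t\|_{op}\log n$ rather than $1/t$, and this must be bootstrapped against the covariance SDE to close a uniform-in-$n$ differential inequality. Concretely, following Guan \cite{guan2024note}, one tracks the joint dynamics of $\|A_t\|_{op}$ and $\Tr(A_t)$ so as to reabsorb the $\log n$ factor: the initial condition $A_0 = I$ ensures $\|A_0\|_{op}=1$, far from saturating the $1/t$ envelope, and the improved Lichnerowicz-type inequality keeps $\|A_t\|_{op}$ from growing too quickly. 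Once one obtains a differential inequality of the form $(\EE\Tr(A_t))'\geq -C\,\EE\Tr(A_t)$ valid on $[0,c_1]$, Gronwall combined with $\Tr(A_0)=n$ immediately gives $\EE\Tr(A_{c_1})\geq c_1 n$.
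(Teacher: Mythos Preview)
The paper does not supply a proof of this lemma; it is quoted as a result of Guan \cite{guan2024note}, with only the remark that Guan's argument relies on the improved Lichnerowicz inequality from \cite{klartag2023logarithmic}. There is thus no proof in the paper to compare your proposal against.

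That said, your outline matches the architecture of Guan's argument: the identity $\frac{d}{dt}\EE\Tr(A_t)=-\EE\Tr(A_t^2)$ is correct, the reduction via $\Tr(A_t^2)\leq\|A_t\|_{op}\Tr(A_t)$ to controlling $\|A_t\|_{op}$ on a unit time interval is the right framing, and you correctly diagnose why the crude bound $\|A_t\|_{op}\leq 1/t$ loses a logarithm. Where your sketch remains incomplete is precisely the step you flag as the obstacle. Feeding the KLS-type Poincar\'e bound $C_P(\mu_t)\lesssim \|A_t\|_{op}\log n$ back into the third-moment SDE for $A_t$ in the Eldan--Lee--Vempala fashion only keeps $\|A_t\|_{op}$ bounded on a time interval of order $1/\Psi_n^2\sim 1/\log n$, still a logarithm short; this by itself does not ``reabsorb the $\log n$ factor'' as you suggest. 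Guan's actual mechanism hinges on the \emph{improved} Lichnerowicz inequality of \cite{klartag2023logarithmic}, which for a $t$-strongly log-concave measure produces a quantitative gap between $\|A_t\|_{op}$ and $1/t$ expressed in terms of spectral data of $A_t$ itself; it is this internal coupling between $\|A_t\|_{op}$ and $\Tr(A_t)$, rather than an external Poincar\'e input, that closes the differential inequality on a time interval of order one. Your proposal gestures at this (``the improved Lichnerowicz-type inequality keeps $\|A_t\|_{op}$ from growing too quickly'') but does not yet contain the closing argument.

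Minor: ``Lee--Vapnik'' should be ``Lee--Vempala''.
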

\medskip
To conclude this section, we analyze the evolution of the measure of sets along the process.
\subsection{Bounding the shrinkage of sets}
In this subsection we assume that $\mu$ has a bounded support with diameter $D$. Let $S\subset\RR^n$ be a set of positive measure and let $g_t = \mu_t(S)$. Using \eqref{eq:density_dynamics}, we compute
\begin{align*}
    dg_t = \left(\int_{S}(x-a_t)d\mu_t\right)\cdot dB_t\ .
\end{align*}
Thus
\begin{align}
    d[g]_t &= \left|\int_{S}(x-a_t)d\mu_t\right|^2 \ dt \nonumber \\
    &\leq D^2 g_t^2 \ dt. \label{eq_239}
\end{align}
Using Itô's formula, we compute
\begin{equation}
    d\EE\log(g_t^{-1}) = \frac{1}{2}\frac{d[g]_t}{g_t^2} \leq \frac{D^2\ dt}{2},
\end{equation}
which integrates to 
\begin{equation}\label{eq_246}
    \EE\log(g_t^{-1}) \leq \log\left(g_0^{-1}\right) + \frac{D^2t}{2}.
\end{equation}
We arrive at the following :
\begin{lemma}\label{lem_299}
    For any measurable set $S$ and any $\lambda>1$
    $$\mu(S) \leq e^{\frac{D^2t}{2}}\mu_t(S)^{\frac{1}{\lambda}}$$
    with probability at least $1-\frac{1}{\lambda}$, where $(\mu_t)_{t\geq0}$ is the stochastic localization process starting at $\mu$.
\end{lemma}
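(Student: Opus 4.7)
The key inequality \eqref{eq_246} already does most of the work: writing $g_t=\mu_t(S)$, it gives
$$\EE\log(g_t^{-1}) \ \leq \ \log(g_0^{-1}) + \frac{D^2t}{2},$$
and since $g_t\leq 1$ the random variable $\log(g_t^{-1})$ is nonnegative. My plan is to apply Markov's inequality to it at the threshold $\lambda\bigl(\log(g_0^{-1}) + D^2t/2\bigr)$, which by the above expected-value bound is at least $\lambda\,\EE\log(g_t^{-1})$.

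Concretely, assuming the trivial case $\mu(S)=1$, $t=0$ aside so that the threshold is strictly positive, Markov yields
$$\PP\!\left(\log(g_t^{-1}) \ > \ \lambda\bigl(\log(g_0^{-1}) + \tfrac{D^2t}{2}\bigr)\right) \ \leq \ \frac{\EE\log(g_t^{-1})}{\lambda\bigl(\log(g_0^{-1}) + D^2t/2\bigr)} \ \leq \ \frac{1}{\lambda}.$$
On the complementary event, whose probability is at least $1-1/\lambda$, we obtain
$$\log(g_t^{-1}) \ \leq \ \lambda\log(g_0^{-1}) + \lambda\,\frac{D^2t}{2}.$$
Exponentiating and rearranging this last display is purely algebraic: it is equivalent to $g_0\leq e^{D^2t/2}\,g_t^{1/\lambda}$, which is precisely the claimed inequality $\mu(S)\leq e^{D^2t/2}\mu_t(S)^{1/\lambda}$.

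There is no real obstacle here; the only slightly delicate point is verifying the sign/nonnegativity conditions needed to apply Markov. This is immediate since $g_t\in[0,1]$ forces $\log(g_t^{-1})\geq 0$, and the threshold is nonnegative because both $\log(g_0^{-1})$ and $D^2t/2$ are nonnegative. The degenerate cases ($\mu(S)=1$ with $t=0$) make the conclusion trivial, so the argument goes through in full generality.
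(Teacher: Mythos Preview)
Your proof is correct and follows exactly the same route as the paper: apply Markov's inequality to the nonnegative random variable $\log(g_t^{-1})$ using the expectation bound \eqref{eq_246}, then rearrange. The only cosmetic difference is that the paper disposes of the degenerate cases by assuming $\mu(S)>0$ at the outset, whereas you single out the case $\mu(S)=1$, $t=0$; either way the edge cases are trivial.
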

\begin{proof}
    Assume that $\mu(S)>0$ and let $\lambda>1$. From \eqref{eq_246}, using Markov's inequality, we get that with probability at least $1-\frac{1}{\lambda}$
    $$\log\frac{1}{g_t} \leq \lambda\left(\log\frac{1}{g_0}+\frac{D^2t}{2}\right).$$
    Equivalently,
    $$\log g_0 \leq \frac{1}{\lambda}\log{g_t} + \frac{D^2t}{2}$$
    which is what we wanted to prove.
\end{proof}
\section{Proof of the main results}
\subsection{Proof of Theorem \ref{thm:main}}
By section \ref{subsec_reduction} it is enough to consider an isotropic log-concave vector $X\sim \mu$ with support of diameter $\sqrt{C_1n}$. Let $(\mu_t)_{t\geq0}$ be the stochastic localization process starting at $\mu$. We write $X_t$ for the random vector having law $\mu_t$. Let $c_1$ be the constant appearing in Lemma \ref{lem:Guan}. We define the event 
\begin{equation}
    E_0 = \{\Tr(A_{c_1})\geq \frac{c_1n}{2} \}.
\end{equation}
\begin{lemma}
    The event $E_0$ happens with probability at least
    $$\PP(E_0) \geq \frac{c_1^2}{2}.$$
\end{lemma}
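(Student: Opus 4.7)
The plan is to combine the lower bound on the expectation coming from Guan's lemma with the almost-sure upper bound on $A_t$ coming from the log-concave Lichnerowicz inequality \eqref{eq:cov_almost_sure}, via a standard reverse-Markov (Paley--Zygmund type) argument.

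More precisely, set $Y = \Tr(A_{c_1})$. By Lemma \ref{lem:Guan} we have $\EE Y \geq c_1 n$. On the other hand, \eqref{eq:cov_almost_sure} yields $A_{c_1} \preceq \frac{1}{c_1} I_n$ almost surely, so that $Y \leq \frac{n}{c_1}$ with probability one. The strategy is simply to split the expectation according to whether the event $E_0 = \{Y \geq \frac{c_1 n}{2}\}$ occurs.

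Writing $\EE Y = \EE[Y \mathbb{1}_{E_0}] + \EE[Y \mathbb{1}_{E_0^c}]$, one bounds the first term by $\frac{n}{c_1} \PP(E_0)$ using the almost-sure upper bound, and the second by $\frac{c_1 n}{2}$ using the definition of $E_0^c$. Combining with Guan's lower bound gives
\begin{equation*}
    c_1 n \ \leq \ \frac{n}{c_1}\PP(E_0) + \frac{c_1 n}{2},
\end{equation*}
from which the desired inequality $\PP(E_0) \geq \frac{c_1^2}{2}$ follows after rearranging.

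There is really no obstacle here: the argument is a one-line Paley--Zygmund computation, and both ingredients (the expectation lower bound from Lemma \ref{lem:Guan} and the almost-sure upper bound from \eqref{eq:cov_almost_sure}) have already been set up. The only minor point to keep in mind is to use the \emph{trace} form of the Lichnerowicz bound, i.e.\ $\Tr(A_{c_1}) \leq n/c_1$, which is the direct consequence of $A_{c_1} \preceq \frac{1}{c_1} I_n$.
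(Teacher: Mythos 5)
Your argument is correct and is exactly the paper's proof: split $\EE \Tr(A_{c_1})$ according to $E_0$, bound the first piece by $\frac{n}{c_1}\PP(E_0)$ using the almost-sure Lichnerowicz estimate \eqref{eq:cov_almost_sure}, bound the second by $\frac{c_1 n}{2}$ using the definition of $E_0^c$, and compare with Guan's lower bound. Nothing to add.
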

\begin{proof}
    Denote by $p_0 = \PP(E_0)$ the probability we want to lower bound. Remember that we have \eqref{eq:cov_almost_sure} :
    $$A_{c_1}\leq \frac{1}{c_1} \quad \text{a.s}$$
    Thus,
    \begin{align*}
        c_1n\leq \EE\Tr(A_{c_1}) &= \EE\Tr(A_{c_1}\mathbb{1_{E_0} }) + \EE\Tr(A_{c_1}\mathbb{1_{E_0^c} }) \\
        &\leq \frac{np_0}{c_1} + \frac{c_1n}{2}
    \end{align*}
    and the lemma follows.
\end{proof}
Let $\varepsilon<c$ and set $S_\varepsilon = B_2(0,\sqrt{\varepsilon n})$. We choose $\lambda = \frac{4}{c_1^2}$. By Lemma \ref{lem_299}, the event 
$$E_1 = \{\mu(S_\varepsilon) \leq e^{\frac{C_1n}{2}}\mu_t(S_\varepsilon)^{\frac{1}{\lambda}} \}$$
happens with probability at least $1-\frac{1}{\lambda} = 1-\frac{c_1^2}{4}$, which implies that
$$\PP(E_0\cap E_1) > 0.$$
By Lemma \ref{lem_171} and Lemma \ref{lem:barky_subgauss}, we get that on the event $E_0\cap E_1$,
\begin{align*}
    \mu(S_\varepsilon) &\leq e^{\frac{C_1n}{2}}\mu_t(S_\varepsilon)^{\frac{1}{\lambda}}\\
    &\leq e^{\frac{C_1n}{2}}\left(\frac{16\varepsilon}{c_1^3}\right)^{\frac{cc_1^8n}{256}} \\
    &\leq \varepsilon^{\frac{cc_1^8n}{512}}
\end{align*}
for $\varepsilon \leq \frac{c_1^6}{256e^{C_1}}\wedge c$. Thus Theorem \ref{thm:main} is proved with
$$c_0 = \min\left(c, \ \frac{c_1^6}{256e^{C_1}}, \ \frac{cc_1^8n}{512}\right) $$

\subsection{Proof of Theorem \ref{thm_slicing}}\label{subsec_slicing}
Let $K\in\RR^n$ be a convex body in convex isotropic position:
$$\Vol(K)=1 \quad \quad \Cov(K) = L_K^2I_d,$$
with $L_K=L_n$. The existence of such an extremizer is classical, and obtained via a compactness argument. It was shown in \cite{milman2004symmetrization} that $K$ is in $M$-position. Namely, there exists some constants $C_2,c_2>0$ such that 
\begin{equation}\label{eq:M_pos}
    \Vol\left(K\cap \sqrt{C_2n}B_2^n\right) > c_2^n.
\end{equation}
However by applying Theorem \ref{thm:main} to the uniform measure on $L_K^{-1}K$, which is log-concave and isotropic, we get that for any $0<c_3\leq c_0$,
\begin{equation}\label{eq_391}
    \Vol\left(K\cap L_K\sqrt{c_3n}B_2^n\right)\leq c_3^{c_0n}.
\end{equation}
Choosing $c_3 = \min\left(c_0,c_2^{\frac{1}{c_0}}\right)$, and comparing \eqref{eq:M_pos} and \eqref{eq_391}, we get 
\begin{equation}
    L_n^2 = L_K^2 < \frac{C_2}{c_3}.
\end{equation}

\bibliographystyle{plain}
\bibliography{biblio}
\end{document}